\newtheorem{lemma}{Lemma}[section]
\newtheorem{proposition}[lemma]{Proposition}
\newtheorem{theorem}[lemma]{Theorem}
\theoremstyle{definition}
\theoremstyle{remark}
\numberwithin{equation}{section} \numberwithin{table}{section}
\DeclareMathOperator{\var}{\mathrm{var}}
\DeclareMathOperator{\ess}{\mathrm{ess}}
\begin{document}
\title[Measurable Liv\v{s}ic regularity via transfer operators]{General real measurable Liv\v{s}ic regularity via transfer operators}
\author{Ian D. Morris}
\address{School of Mathematical Sciences, Queen Mary University of London, Mile End Road, London E1 4NS, United Kingdom}
\email{i.morris@qmul.ac.uk }

\begin{abstract}
We prove a general measurable Liv\v{s}ic regularity theorem for real-valued cocycles over non-invertible dynamical systems using only abstract hypotheses on an associated transfer operator. As illustrative applications we derive measurable Liv\v{s}ic regularity results in the analytic regularity class for cocycles over real-analytic expanding maps, in the bounded-variation regularity class for $\beta$-transformations, and in $C^\alpha$ regularity over the class of virtually expanding maps recently introduced by M. Tsujii. 
\end{abstract}

\maketitle
\section{Introduction and statement of results}


If $T$ is an ergodic measure-preserving transformation of a probability space $(X,\mathcal{F},\mu)$ then a measurable function $f \colon X \to \mathbb{R}$ is commonly called a \emph{coboundary} if it may be written in the form $f=h \circ T - h $ $\mu$-a.e. for some measurable function $h \colon X \to \mathbb{R}$. The problem of determining whether or not a given function is a coboundary is of key interest in a number of dynamical questions: in the study of statistical limit laws for dynamical systems the coboundary equation typically characterises zero variance, and it is also essential in determining the ergodicity of suspension flows and skew product dynamical systems.  The problem of characterising coboundaries over dynamical systems has therefore received substantial research attention. A.N. Liv\v{s}ic showed in the seminal article \cite{Li71} that if $T \colon X \to X$ is an Anosov diffeomorphism, $\mu$ is absolutely continuous with respect to Lebesgue measure, 
and $f$ is $C^1$ continuous and has the form $f=h\circ T - h$ Lebesgue a.e. where $h \colon X \to \mathbb{R}$ is measurable and essentially bounded, then $f$ may be written in the form $f=g \circ T -g$ where $g$ is also $C^1$. This classical \emph{Liv\v{s}ic regularity theorem} has since been extended in various directions, both by allowing the coboundary $f$ to take values in more general groups and by considering different and broader classes of transformation $T \colon X \to X$ and measure $\mu$, with suitably modified regularity criteria for $f$ (see for example \cite{Bu18,Do05,Go06,Je02,NiPe12,PoWa01,Sa15,Wa00b,ZoCa21}). In \cite{Li71} Liv\v{s}ic also showed that if $T \colon X \to X$ is an Anosov diffeomorphism and $f \colon X \to \mathbb{R}$ is H\"older continuous then $f$ is a coboundary if and only if the ergodic average $\frac{1}{n}\sum_{k=0}^{n-1}f(T^kx)$ is zero for every periodic point $x=T^nx \in X$. Generalisations of this result to coboundaries taking values in more general groups, and to a broader class of base dynamical systems, constitute a parallel and closely-related stream of research (see e.g. \cite{AvKoLi18,GoRo24,Ka11,KoPo16,Sa15,Sh24,Wi13}).

The purpose of this note is to state a mechanism for proving Liv\v{s}ic regularity theorems of the first kind -- in which a measurable coboundary is proved to have additional, stronger regularity properties -- which avoids any overt dependence on the transformation $T$ itself but instead relies on properties of an associated transfer operator, being therefore easily portable across dynamical contexts. While transfer operator approaches to this problem are certainly not new, earlier results such as \cite{Je02,NiPe12,PaPo97,PoYu99} either rely on additional \emph{ad hoc} steps peculiar to each dynamical context or include nontrivial integrability hypotheses on the measurable function $h$ in the coboundary equation $f = h \circ T - h$, when in many contexts only measurability of $h$ is needed. We prove the following very general statement:
\begin{theorem}\label{th:main}
Let $T$ be a measurable transformation of a probability space $(X,\mathcal{F},\mu)$,  $(\mathfrak{X},\|\cdot\|_\mathfrak{X})$ a Banach space of (equivalence classes of) measurable functions from $X$ to $\mathbb{C}$, and $\mathscr{L}$ a linear operator which acts boundedly on $\mathfrak{X}$. Suppose that $T$, $\mathscr{L}$ and $\mathfrak{X}$ satisfy the five properties:
\begin{enumerate}[O1.]
\item\label{it:nonsing}
The transformation $T$ is nonsingular.
\item\label{it:eigen}
There exists a unique $T$-invariant measure which is absolutely continuous with respect to $\mu$. Let $\chi \in L^1(\mu)$ denote the density of this invariant measure.
\item\label{it:embed}
Every element of $\mathfrak{X}$ is integrable with respect to $\mu$, and the embedding $\mathfrak{X} \to L^1(\mu)$ is continuous and injective and has dense range.
\item\label{it:transfer}
The operator $\mathscr{L}$ satisfies 
\[\int (\psi \circ T)\cdot \phi\,d\mu = \int \psi\cdot (\mathscr{L}\phi)\,d\mu\]
for every $\psi \in L^\infty(\mu)$ and $\phi \in \mathfrak{X}$.
\item\label{it:fredholm}
The essential spectrum of the operator $\mathscr{L}$ acting on $\mathfrak{X}$ does not include the point $1$. (That is, either $1$ does not belong to the spectrum of $\mathscr{L}$ acting on $\mathfrak{X}$, or it is an isolated point of the spectrum and its associated Riesz projection is of finite rank.)
\end{enumerate}
Let $f \in \mathfrak{X}$ satisfy the two properties:
\begin{enumerate}[F1.]
\item\label{it:cocycle}
There exists a measurable function $h \colon X \to \mathbb{C}$ which is real-valued $\mu$-a.e. and satisfies $f = h \circ T -h$ $\mu$-a.e.
\item\label{it:bdd}
The pointwise multiplication operator $\phi \mapsto f\phi$ is a well-defined bounded linear operator $\mathfrak{X} \to \mathfrak{X}$.
\end{enumerate}
Then $\chi \in \mathfrak{X}$, and there exists $g \in \mathfrak{X}$ such that $g = h\chi$ $\mu$-a.e. In particular if $\phi/\chi \in \mathfrak{X}$ for every $\phi \in \mathfrak{X}$ then we may write $f=\hat{g} \circ T - \hat{g}$ $\mu$-a.e. with $\hat{g}:=g/\chi \in \mathfrak{X}$. 
\end{theorem}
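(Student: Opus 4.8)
The plan is to reduce everything to a single Fredholm equation for the candidate function $g=h\chi$ and then to identify its solution with $h\chi$ by an ergodic uniqueness argument. Throughout I write $\nu=\chi\,d\mu$ for the invariant probability measure and $\Lambda(\phi)=\int\phi\,d\mu$, a bounded functional on $\mathfrak X$ by O3. First I would record the consequences of O4: testing against $\psi\equiv 1$ gives $\Lambda\circ\mathscr L=\Lambda$, so $\Lambda$ is a nonzero fixed vector of the transpose $\mathscr L'$ and hence $1\in\sigma(\mathscr L)$; by O5 the point $1$ is therefore an isolated eigenvalue with finite-rank Riesz projection $P$, and $E=\operatorname{Ran}P$ is a nonzero finite-dimensional $\mathscr L$-invariant subspace on which $\mathscr L$ has spectrum $\{1\}$, so $\ker(I-\mathscr L)\neq\{0\}$. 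Since $\mathscr L$ agrees on $\mathfrak X$ with the $L^1$-transfer operator, any fixed $\phi$ makes $\phi\,d\mu$ an absolutely continuous invariant measure; the positivity estimate $|\phi|\le\mathscr L|\phi|$ together with $\Lambda$-invariance forces $|\phi|$ to be an invariant density, and the uniqueness in O2 (equivalently, ergodicity of $\nu$) then forces $\phi\in\mathbb{C}\chi$. Hence $\ker(I-\mathscr L)=\mathbb{C}\chi$ is one-dimensional; in particular $\chi\in\mathfrak X$, which is the first assertion.

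Next I would use the pull-out identity $\mathscr L((\psi\circ T)\phi)=\psi\,\mathscr L\phi$ for bounded $\psi$, which follows from O4 by testing. Multiplying $f=h\circ T-h$ by $\chi$ and applying $\mathscr L$ (using $\mathscr L\chi=\chi$) suggests that $g=h\chi$ should satisfy $(I-\mathscr L)g=\mathscr L(f\chi)$. The right-hand side genuinely lies in $\mathfrak X$: by F2 and $\chi\in\mathfrak X$ we have $f\chi\in\mathfrak X$, so $\mathscr L(f\chi)\in\mathfrak X$. By the Fredholm alternative at the isolated eigenvalue $1$, this equation is solvable in $\mathfrak X$ precisely when $\mathscr L(f\chi)$ annihilates the fixed space of $\mathscr L'$, that is, when $\Lambda(\mathscr L(f\chi))=\int f\chi\,d\mu=0$. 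To verify this without any integrability of $h$, I would note $f\in L^1(\nu)$ (since $f\chi\in L^1(\mu)$) and combine Birkhoff's theorem with Poincar\'e recurrence: writing the Birkhoff sums $S_nf=\sum_{k=0}^{n-1}f\circ T^k$, if $\int f\,d\nu>0$ then $S_nf=h\circ T^n-h\to+\infty$ $\nu$-a.e., contradicting the infinitely many returns of $\nu$-a.e.\ point to a sublevel set $\{h<M\}$ of positive measure, and the reverse inequality is symmetric. The alternative then produces a real-valued $u\in\mathfrak X$ with $u=\mathscr L(u+f\chi)$, unique modulo $\mathbb{C}\chi$.

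It remains to identify $u$ with $h\chi$. Setting $v=u/\chi\in L^1(\nu)$ and dividing the operator equation by $\chi$, I would check that $v$ solves $\hat{\mathscr L}(v+f)=v$ for the normalized transfer operator $\hat{\mathscr L}\phi=\chi^{-1}\mathscr L(\chi\phi)$, which is the $L^1(\nu)$-transfer operator of $(T,\nu)$ and satisfies $\hat{\mathscr L}1=1$ together with $U\hat{\mathscr L}=E[\,\cdot\mid T^{-1}\mathcal F]$, where $U$ denotes composition with $T$. The goal is to show that $w:=v-h$ is constant $\nu$-a.e.; granting this, $u=(h+c)\chi$, and $g:=u-c\chi$ (taken to be $0$ on the $\nu$-null set $\{\chi=0\}$) lies in $\mathfrak X$ and equals $h\chi$ $\mu$-a.e., while under the extra hypothesis $\chi>0$ $\mu$-a.e.\ and $\hat g=g/\chi\in\mathfrak X$ satisfies $\hat g\circ T-\hat g=h\circ T-h=f$. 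Once one knows that $w\in L^1(\nu)$, this last step is soft: applying $U$ to $\hat{\mathscr L}(v+f)=v$ gives $v\circ T=E[v+f\mid T^{-1}\mathcal F]$, and subtracting $h\circ T=h+f$ yields $w\circ T=E[w\mid T^{-1}\mathcal F]$; since $U$ is an $L^1(\nu)$-isometry while conditional expectation is an $L^1$-contraction, equality of norms together with the equality case in the conditional Jensen inequality (applied after reducing to $w\ge 0$, where strict concavity of the square root can be used) forces $w$ to be $T^{-1}\mathcal F$-measurable, whence $w\circ T=w$ and ergodicity makes $w$ constant.

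The crux, and the step I expect to be hardest, is exactly the integrability $h\in L^1(\nu)$ assumed in the previous paragraph. This cannot come from soft ergodic theory alone: over a Liouville rotation there exist $L^1$ (indeed smooth) coboundaries whose transfer function $h$ is measurable but not integrable, and what rules this out is precisely O5 (which fails there because $1$ lies in the essential spectrum). Thus the integrability of $h$ must be \emph{extracted} from the abstract solution $u\in\mathfrak X$, and the difficulty is that in the genuinely non-invertible setting the transfer operator only returns $v\circ T=E[v+f\mid T^{-1}\mathcal F]$ rather than the pointwise identity $v\circ T=v+f$, so $v$ and $h$ cannot be compared directly. I would attack this by iterating $u=\mathscr L(u+f\chi)$ and testing against bounded $\psi$ to obtain, via O4, the identity $\int\psi u\,d\mu-\int(\psi\circ T^n)u\,d\mu=\int\bigl(\sum_{k=1}^{n}\psi\circ T^k\bigr)f\,d\nu$, and then combine it with a truncation of $h$ and Poincar\'e recurrence in order to bound $v-h$ and conclude it is $\nu$-a.e.\ finite and integrable. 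This coupling of the Banach-space regularity encoded in O5 and F2 with the merely measurable pointwise cocycle $h$ is where the real work of the theorem lies.
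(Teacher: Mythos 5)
Your proposal sets up a reasonable skeleton and your diagnosis of where the difficulty lies is accurate, but the proof is not complete: the step you yourself flag as ``the crux'' --- identifying the Fredholm solution $u$ of $(I-\mathscr{L})u=\mathscr{L}(f\chi)$ with $h\chi$ modulo $\mathbb{C}\chi$ --- is left as an unexecuted plan, and it is precisely the content of the theorem. Everything up to that point (the extension of $\mathscr{L}$ to $L^1$, the identification $\ker(I-\mathscr{L})=\mathbb{C}\chi$ giving $\chi\in\mathfrak{X}$, the solvability condition $\int f\,d\nu=0$ via Birkhoff and Poincar\'e recurrence, and the resulting $u\in\mathfrak{X}$ with $u=\mathscr{L}(u+f\chi)$) is sound, but it only produces \emph{some} element of $\mathfrak{X}$ satisfying the ``right'' equation. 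Your route to matching $u$ with $h\chi$ passes through conditional expectations and the equality case of conditional Jensen, all of which presuppose $w=u/\chi-h\in L^1(\nu)$; since $h$ is only assumed measurable, this is exactly the integrability hypothesis the theorem is designed to eliminate, and the sketch you offer for extracting it (iterating the equation, testing against bounded $\psi$, truncating $h$) does not obviously close: the telescoping identity you write down controls $\int(\psi\circ T^k)f\,d\nu$ but gives no pointwise or integrable control on $v-h$, because in the non-invertible setting the fixed-point equation for $\hat{\mathscr{L}}$ only constrains $v$ through averages over preimages.

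The paper avoids this identification problem entirely by working multiplicatively rather than additively. It forms the twisted operators $\mathscr{L}_t\phi=\mathscr{L}(e^{itf}\phi)$, which depend holomorphically on $t\in\mathbb{C}$; analytic perturbation of the simple isolated eigenvalue at $1$ yields a holomorphic eigenfunction family $\chi_t=\mathscr{P}_t\chi\in\mathfrak{X}$. For real $t$ the functional $\phi\mapsto\int e^{-ith}\phi\,d\mu$ is bounded and nonzero \emph{because $h$ is real-valued}, so $|e^{-ith}|=1$ and no integrability of $h$ is ever invoked; this forces the perturbed eigenvalue to equal $1$, and the intertwining $\mathscr{L}_t(e^{-ith}\phi)=e^{-ith}\mathscr{L}\phi$ on $L^1$ identifies $\chi_t$ with $c(t)e^{-ith}\chi$. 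Differentiating at $t=0$ (using the truncations $X_\tau=\{|h|\le\tau\}$ and dominated convergence to compute the derivative of the right-hand side) produces $h\chi=-ic'(0)\chi+i\chi_0'\in\mathfrak{X}$. In effect the paper replaces your single linear equation at $t$ fixed by a one-parameter analytic family whose derivative at $t=0$ \emph{is} the desired regularity statement for $h\chi$; the boundedness of $e^{-ith}$ is what substitutes for the missing integrability of $h$. Without an argument of this kind (or a genuinely new one supplying $h\in L^1(\nu)$ from O5 and F2 alone), your proof has a gap at its central step.
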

The proof of Theorem \ref{th:main} is presented in the following section, and some examples of applications are indicated in \S\ref{se:ex}. These examples are selected principally for their simplicity, but they include some novelties such as an improved Liv\v{s}ic theorem for the $\beta$-transformation (which removes the integrability hypotheses from the earlier results of \cite{NiPe12,PoYu99}) and a Liv\v{s}ic theorem for the \emph{virtually expanding maps} introduced recently by M. Tsujii in \cite{Ts23}. 

In the applications presented here O\ref{it:fredholm} will follow from the stronger hypothesis that the essential spectral radius of $\mathscr{L}$ on $\mathfrak{X}$ is strictly less than $1$: however, this stronger version of O\ref{it:fredholm} is not itself required. Similarly, in the applications given here it so happens that $\mathfrak{X}$ is a Banach algebra, but F\ref{it:bdd} is clearly much weaker than this hypothesis. It is an interesting open problem to extend Theorem \ref{th:main} to the context of invertible transformations using anisotropic Banach spaces.

\section{Proof of Theorem \ref{th:main}}

\subsection{Fundamentals: action of $\mathscr{L}$ on $L^1(\mu)$}
We first observe that $\mathscr{L}$ defines a bounded linear operator acting on $L^1(\mu)$. Indeed, for every $\phi \in \mathfrak{X}$ and $\psi \in L^\infty(\mu)$ we have
\[\left|\int \psi\cdot ( \mathscr{L}\phi)\,d\mu\right|= \left|\int (\psi \circ T)\cdot\phi \,d\mu \right|\leq \|\psi \circ T\|_{L^\infty(\mu)} \|\phi\|_{L^1(\mu)} \leq  \|\psi\|_{L^\infty(\mu)} \|\phi\|_{L^1(\mu)}\]
using O\ref{it:transfer} followed by O\ref{it:nonsing}. Since $\mathfrak{X}$ is dense in $L^1(\mu)$ by O\ref{it:embed} it follows easily that $\mathscr{L}$ extends to a bounded linear operator on $L^1(\mu)$ with norm $1$, as claimed. 

We next claim that $\mathscr{L}$, acting on $L^1(\mu)$, has a simple eigenvalue at $1$ with eigenfunction $\chi$. Suppose that $\phi \in L^1(\mu)$ is nonzero, real-valued a.e. and satisfies $\mathscr{L}\phi=\phi$, and define $\nu$ to be the signed measure which is absolutely continuous with respect to $\mu$ and has density $\phi$. Since for all $\psi \in L^\infty(\mu)$
\begin{equation}\label{eq:ttt}\int (\psi \circ T)\cdot \phi\,d\mu=\int \psi\cdot(\mathscr{L}\phi)\,d\mu=\int \psi\cdot \phi\,d\mu,\end{equation}
by O\ref{it:transfer}, this measure is $T$-invariant. Decomposing $\nu$ into a difference $\nu_+-\nu_-$ where each of $\nu_+$ and $\nu_-$ is a measure, we see that each of $\nu_+$ and $\nu_-$  is $T$-invariant and absolutely continuous with respect to $\mu$. By O\ref{it:eigen} this implies that one of $\nu_+$ and $\nu_-$ is the zero measure and the other is a nonzero scalar multiple of the measure with density $\chi$, and therefore $\phi$ is a scalar multiple of $\chi$. This conclusion clearly holds also if $\phi \in L^1(\mu)$ is a nonzero and complex-valued eigenfunction by considering separately the real and imaginary parts of $\phi$. On the other hand it follows easily from \eqref{eq:ttt} and the defining property of $\chi$ that $\mathscr{L}\chi=\chi$. We have shown that $1$ is an eigenvalue of $\mathscr{L}$ acting on $L^1(\mu)$ and that $\chi$ spans the corresponding eigenspace. Since $\mathscr{L}$ has norm $1$ it is impossible for there to be a generalised eigenfunction at $1$ since for such a function $\varphi$ the sequence $\mathscr{L}^n\varphi$ would be unbounded. The claim is proved.

We now consider the action of $\mathscr{L}$ on $\mathfrak{X}$. Since for all $\phi \in \mathfrak{X}$
\[\int (\mathscr{L}-I)\phi\,d\mu = \int \mathscr{L}\phi \,d\mu - \int \phi \,d\mu= \int \phi \,d\mu -\int \phi \,d\mu=0\]
using O\ref{it:transfer} with $\psi\equiv 1$, the image of the operator $\mathscr{L}-I$ acting on $\mathfrak{X}$ contains only functions with zero integral. Since $\mathfrak{X}$ embeds densely in $L^1(\mu)$ by O\ref{it:embed} this implies that $\mathscr{L}-I$ cannot be surjective, so $1$ is in the spectrum of $\mathscr{L}$ acting on $\mathfrak{X}$. By O\ref{it:fredholm} this implies that $1$ is an eigenvalue of $\mathscr{L}$ acting on $\mathfrak{X}$. Since $\mathfrak{X}$ embeds injectively into $L^1(\mu)$ by O\ref{it:embed}, this eigenvalue must be simple. It follows that $\chi$ is equal (up to measure zero) to an element of $\mathfrak{X}$, and we identify $\chi$ with this element of $\mathfrak{X}$ henceforth.

\subsection{The perturbed operator and its Riesz projection}
Let $f \in \mathfrak{X}$ be as given by F\ref{it:cocycle}. Let $\mathscr{M}_f \colon \mathfrak{X} \to \mathfrak{X}$ denote the operator corresponding to pointwise multiplication by $f$ (which by F\ref{it:bdd} is a bounded linear operator) and for each $t \in \mathbb{C}$ define an operator $\mathscr{L}_t$ on $\mathfrak{X}$ by
\[\mathscr{L}_t:=\sum_{n=0}^\infty \frac{(it)^n}{n!} \mathscr{L} \mathscr{M}_f^n.\]
The function $t \mapsto \mathscr{L}_t$ is an operator-valued power series in $t$ with coefficients in $\mathcal{B}(\mathfrak{X})$ which converges for all $t \in \mathbb{C}$, which is to say that it defines a holomorphic function $\mathbb{C}\to\mathcal{B}(\mathfrak{X})$. Clearly each $\mathscr{L}_t$ extends to a bounded linear operator on $L^1(\mu)$ which satisfies
 $\mathscr{L}_t\phi \equiv \mathscr{L}(e^{itf}\phi)$ for all $\phi \in L^1(\mu)$.

By O\ref{it:fredholm} together with the preceding results, the operator $\mathscr{L}$ acting on $\mathfrak{X}$ has a simple, isolated eigenvalue at $1$. Let $\Gamma$ be an anticlockwise-oriented circular contour in $\mathbb{C}$ centred at $1$ which does not intersect or enclose any other point of the spectrum of $\mathscr{L}$ acting on $\mathfrak{X}$. There exists a connected open neighbourhood $U$ of $0 \in \mathbb{C}$ such that for every $t \in U$ the  Riesz projection
\[\mathscr{P}_t:=\frac{1}{2\pi i}\int_\Gamma \left(sI-\mathscr{L}_t\right)^{-1}ds\]
is a bounded linear operator on $\mathfrak{X}$ and satisfies $\mathscr{P}_t^2=\mathscr{P}_t$ and $\mathscr{L}_t\mathscr{P}_t=\mathscr{P}_t\mathscr{L}_t\neq0$, and such that for every $t \in U$ the operator $\mathscr{L}_t$ has a unique eigenvalue in the region of $\mathbb{C}$ enclosed by the contour  $\Gamma$ and no other points of the spectrum of $\mathscr{L}_t$ occur in that region. Moreover the function $t \mapsto \mathscr{P}_t$ is a holomorphic function from $U$ to $\mathcal{B}(\mathfrak{X})$, see for example \cite[\S{VII}.1]{Ka95}. Note that the rank of $\mathscr{P}_t$ must be constant with respect to $t \in U$ by continuity since each $\mathscr{P}_t$ is a projection, and since $1$ is a simple eigenvalue the rank of $\mathscr{P}_0$ is $1$, so every $\mathscr{P}_t$ has rank one. By shrinking $U$ if necessary we may assume by continuity that $\mathscr{P}_t\chi \neq 0$ for all $t \in U$. It follows that the function $\chi_t:=\mathscr{P}_t\chi$ is an eigenfunction of $\mathscr{P}_t$, and hence of $\mathscr{L}_t$, for all $t \in U$. Let $\lambda(t)$ be the associated eigenvalue so that $\mathscr{L}_t\chi_t=\lambda(t)\chi_t$ for every $t \in U$.

\subsection{The leading eigenvalue of the perturbed operator}
We claim that $\lambda(t)=1$ for all real $t \in U$. Let $h \colon X \to \mathbb{C}$ be as given in F\ref{it:cocycle}. If $t \in U$ is real, define a linear functional $\ell_t \colon \mathfrak{X} \to \mathbb{C}$ by $\ell_t(\phi):=\int e^{-ith}\phi \,d\mu$. Using O\ref{it:embed}, choose $C>0$ such that $\int|\phi|\,d\mu \leq C \|\phi\|_{\mathfrak{X}}$ for all $\phi \in \mathfrak{X}$. Since $h$ takes real values $\mu$-a.e. by F\ref{it:cocycle} we have
\[|\ell_t(\phi)|\leq \int|e^{-ith}\phi|\,d\mu \leq \int |\phi|\,d\mu \leq C\|\phi\|_{\mathfrak{X}} \]
for all $\phi \in \mathfrak{X}$ and it follows that $\ell_t \colon \mathfrak{X} \to \mathbb{C}$ is continuous. Since $\mathfrak{X}$ embeds densely in $L^1(\mu)$ by O\ref{it:embed} we may choose $\phi \in \mathfrak{X}$ such that $\int|\phi-e^{ith}|d\mu<1$ and therefore
\[|\ell_t(\phi)-1|=\left| \int e^{-ith}\phi\,d\mu-1 \right|=\left| \int e^{-ith}(\phi-e^{ith})\,d\mu \right| \leq \int |\phi-e^{ith}|d\mu<1\]
so that $|\ell_t(\phi)|>0$ and in particular $\ell_t$ is not the zero functional. We next observe that for all $\phi \in\mathfrak{X}$
\[\ell_t(\mathscr{L}_t \phi)=\int e^{-ith}\mathscr{L}\left(e^{itf}\phi\right)\,d\mu = \int e^{-ith\circ T} e^{itf}\phi\,d\mu =\int e^{-ith}\phi\,d\mu=\ell_t(\phi)\]
using O\ref{it:transfer} and F\ref{it:cocycle}, and it follows that $\mathscr{L}_t-I \in \mathcal{B}(\mathfrak{X})$ is not invertible since it takes every element of $\mathfrak{X}$ into the proper subspace $\ker \ell_t$ of $\mathfrak{X}$ and consequently is not surjective. We conclude that $1$ belongs to the spectrum of $\mathscr{L}_t$ for all real $t \in U$. Since for each $t \in U$ there is a unique point of the spectrum of $\mathscr{L}_t$ inside the contour $\Gamma$ we must have $\lambda(t)=1$ for all real $t \in U$  as claimed. 

\subsection{The eigenfunction of the perturbed operator}
We now wish to show that for every $t \in U \cap \mathbb{R}$, the eigenfunction $\chi_t$ is a.e. equal to a scalar multiple of $e^{-ith}\chi$. To begin, we note that for all $\phi \in L^1(\mu)$, $\psi \in L^\infty(\mu)$ and $t \in \mathbb{R}$
\begin{align*}\int \mathscr{L}_t(e^{-ith}\phi)\psi\,d\mu &=\int \mathscr{L}(e^{-ith}e^{itf}\phi)\psi\,d\mu\\
&=\int \mathscr{L}(e^{-ith\circ T}\phi)\psi\,d\mu\\
& = \int \left(e^{-ith \circ T}\phi\right) \psi \circ T\,d\mu= \int e^{-ith}(\mathscr{L}\phi)\psi \,d\mu\end{align*}
using F\ref{it:cocycle}, O\ref{it:transfer} and the definition of $\mathscr{L}_t$. 
Since $\psi \in L^\infty(\mu)$ was arbitrary this demonstrates that $ e^{-ith} \mathscr{L}\phi =\mathscr{L}_t(e^{-ith}\phi)$ for all $\phi \in L^1(\mu)$.  In particular, $\varphi \in L^1(\mu)$ can satisfy $\mathscr{L}_t\varphi=\varphi$ if and only if $\varphi=e^{-ith}\mathscr{L}(e^{ith}\varphi)$, if and only if $e^{ith}\varphi = \mathscr{L}(e^{ith}\varphi)$. Since $\mathscr{L}$ has a $1$-dimensional eigenspace at $1$ when acting on $L^1(\mu)$, we conclude that $\mathscr{L}_t\varphi=\varphi$ if and only if $e^{ith}\varphi$ is a scalar multiple of $\chi$.  For real $t \in U$ the equation $\mathscr{L}_t\chi_t=\lambda(t)\chi_t=\chi_t$ in $\mathfrak{X}$ therefore implies that $\chi_t$ must be proportional to $e^{-ith}\chi$ as an element of $L^1(\mu)$. 
 \subsection{Completion of the proof}
 By the preceding result there  exists a function $c \colon U\cap \mathbb{R} \to \mathbb{C}$ such that $\chi_t = c(t)e^{-ith}\chi$ in $L^1(\mu)$ for all $t \in U \cap \mathbb{R}$. We claim that $c$ is differentiable at zero. For each $\tau>0$ define $X_\tau:=\{x \in X \colon |h(x)| \leq \tau\}$ and let $\mathbf{1}_{X_\tau}$ denote the characteristic function of $X_\tau$. Fix $\tau_0>0$ large enough that $\int_{X_{\tau_0}} \chi d\mu>0$. Clearly $\|h^n \mathbf{1}_{X_{\tau_0}} \|_{L^\infty(\mu)} \leq \tau^n_0$ for every $n \geq 0$, and in particular the function $t \mapsto \int_{X_{\tau_0}} e^{-ith}\chi d\mu = \sum_{n=0}^{\infty} \frac{(it)^n}{n!} \int h^n  \mathbf{1}_{X_{\tau_0}} \chi d\mu$ is analytic with respect to $t$ in a small neighbourhood of $0$ and takes a nonzero value at $t=0$. Since $c(t)=\int_{X_{\tau_0}} \chi_t d\mu / \int_{X_{\tau_0}}e^{-ith}\chi d\mu$ for all real $t \in U$ close enough to $0$ that the denominator is nonzero, the claim follows via the holomorphicity of $t \mapsto \chi_t = \mathscr{P}_t\chi$.
We may now complete the proof of the theorem. Define
\[\chi'_0:=\lim_{t \to 0} \frac{\chi_t-\chi}{t} \in \mathfrak{X}\]
which is well-defined since $t \mapsto \mathscr{P}_t$ is holomorphic. We clearly have
\[\lim_{t \to 0} \frac{1}{t}\left(\int_{X_\tau} \chi_t\psi \,d\mu - \int_{X_\tau}\chi \psi\,d\mu \right)  = \int_{X_\tau}\chi_0'\psi\,d\mu \]
for every $\tau>0$ and $\psi \in L^\infty(\mu)$ by continuity of the linear functional $\phi \mapsto  \int_{X_\tau} \phi\psi \,d\mu$ from $\mathfrak{X}$ to $\mathbb{C}$, where the limit is taken with respect to $t \in U \cap \mathbb{R}$. On the other hand, by the dominated convergence theorem
\[\lim_{t \to 0} \frac{1}{t}\left(\int_{X_\tau} c(t)e^{-ith}\chi \psi \,d\mu - \int_{X_\tau}c(0)\chi \psi\,d\mu \right)  = \int_{X_\tau}(c'(0)-ih)\chi\psi\,d\mu \]
for every $\tau>0$. Hence
\[\int_{X_\tau} \chi_0'\psi\,d\mu = \int_{X_\tau} (c'(0)-ih)\chi \psi\,d\mu\]
for all $\tau>0$ and $\psi \in L^\infty(\mu)$. Fixing $\tau>0$ the arbitrariness of $\psi \in L^\infty(\mu)$ yields $\chi_0'(x)= (c'(0)-ih(x))\chi(x)$ for $\mu$-a.e. $x \in X_\tau$, and since $\tau>0$ is also arbitrary we deduce that $\chi_0'=(c'(0)-ih)\chi$ $\mu$-a.e. Thus $h\chi=-ic'(0)\chi+i\chi_0'$ $\mu$-a.e. and the proof is complete.


\section{Examples}\label{se:ex}

In this section we present some simple applications of Theorem \ref{th:main}, which are chosen primarily for their brevity.
\subsection{The $\beta$-transformation and bounded-variation cocycles}

For $f \colon [0,1] \to \mathbb{C}$ we recall that the \emph{variation} of $f$ is defined to be the quantity
\[|f|_{\var}:=\sup\left\{\sum_{i=1}^n |f(x_i)-f(x_{i+1})| \colon 0 \leq x_1 <x_2<\cdots <x_{n+1}\leq1\text{ and }n \geq 1\right\}\]
and we say that $f$ is of \emph{bounded variation} if $|f|_{\var}<\infty$. Obviously $|f|_{\var}=0$ only when $f$ is constant, and every function of bounded variation is bounded and Borel measurable with at most countably many points of discontinuity. We let $\mathscr{BV}$ denote the set of all $f \colon [0,1] \to \mathbb{C}$ with bounded variation and equip it with the norm $\|f\|_{\mathscr{BV}}:=\|f\|_\infty + |f|_{\var}$ with respect to which it is a Banach space. We let $\mathscr{N} \subset \mathscr{BV}$ denote the subspace of functions which are nonzero at at most countably many points. We observe that $\mathscr{N}$ is a closed subspace: indeed, in the equivalent norm $\frac{1}{2}|\cdot|_{\var}$ the space $\mathscr{N}$ is isometrically isomorphic to the space of $L^1$ functions relative to counting measure on $[0,1]$. 

The following Liv\v{s}ic regularity theorem for the $\beta$-transformation removes an integrability condition occurring in the earlier works \cite{NiPe12,PoYu99}:
\begin{theorem}\label{th:app2}
Let $\beta>1$ and let $T_\beta \colon [0,1] \to [0,1]$ by given by $T_\beta x:=\beta x \mod 1$. If $f \in \mathscr{BV}$ satisfies $f = h \circ T - h$ Lebesgue-a.e. for some measurable function $h \colon [0,1] \to \mathbb{C}$, then there exists $g \in \mathscr{BV}$ such that $h=g$ Lebesgue a.e. 
\end{theorem}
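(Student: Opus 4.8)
The plan is to deduce Theorem \ref{th:app2} from Theorem \ref{th:main} by verifying the abstract hypotheses O\ref{it:nonsing}--O\ref{it:fredholm} and F\ref{it:cocycle}--F\ref{it:bdd} for the $\beta$-transformation acting on a suitable Banach space, and then translating the conclusion $g = h\chi$ back into the desired statement $h = g$. Here the natural candidate for $(\mathfrak{X},\|\cdot\|_\mathfrak{X})$ is the space $\mathscr{BV}$ with its norm $\|\cdot\|_{\mathscr{BV}}$, the base map is $T = T_\beta$, and $\mu$ is Lebesgue measure. The transfer operator $\mathscr{L}$ is the classical Perron--Frobenius operator associated to $T_\beta$ and Lebesgue measure, given explicitly by $(\mathscr{L}\phi)(y) = \sum_{T_\beta x = y} \phi(x)/|T_\beta'(x)| = \beta^{-1}\sum_{T_\beta x = y} \phi(x)$.

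First I would address the embedding hypothesis O\ref{it:embed}. The subtlety is that elements of $\mathscr{BV}$ are genuine functions while $L^1(\mu)$ consists of equivalence classes, so the map $\mathscr{BV} \to L^1(\mu)$ is not literally injective: any function in $\mathscr{N}$ maps to zero. The standard remedy is to pass to the quotient $\mathfrak{X} := \mathscr{BV}/\mathscr{N}$, which is a Banach space because $\mathscr{N}$ is closed (as noted in the excerpt), and on which the induced embedding into $L^1(\mu)$ is injective; continuity is immediate from $\|\phi\|_{L^1(\mu)} \leq \|\phi\|_\infty \leq \|\phi\|_{\mathscr{BV}}$, and density holds because step functions, hence $\mathscr{BV}$, are dense in $L^1([0,1])$. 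I would then note that $\mathscr{L}$ and pointwise multiplication by $f$ descend to well-defined bounded operators on the quotient, the latter giving F\ref{it:bdd} since $\mathscr{BV}$ is a Banach algebra. Properties O\ref{it:nonsing} and O\ref{it:eigen} are classical for $T_\beta$: it is nonsingular with respect to Lebesgue measure, and by the work of Rényi and Gel'fond--Parry it has a unique absolutely continuous invariant measure with a bounded-variation density $\chi$ that is bounded above and bounded away from zero on its support. Property O\ref{it:transfer} is the defining duality of the Perron--Frobenius operator, while O\ref{it:fredholm} follows from the Lasota--Yorke inequality, which furnishes a uniform bound of the form $|\mathscr{L}^n\phi|_{\var} \leq C\theta^n|\phi|_{\var} + C\|\phi\|_{L^1}$ with $\theta<1$ and hence, via the Hennion/Ionescu-Tulcea--Marinescu argument, an essential spectral radius strictly less than $1$.

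Having verified the hypotheses, Theorem \ref{th:main} yields $\chi \in \mathfrak{X}$ and an element $g_0 \in \mathfrak{X}$ with $g_0 = h\chi$ $\mu$-a.e. To reach the conclusion $h = g$ with $g \in \mathscr{BV}$ I would divide by $\chi$. The point where this requires care is that $\chi$ may vanish: for $T_\beta$ the invariant density is supported on $[0,1]$ but is a finite sum of indicator functions of intervals weighted by powers of $\beta^{-1}$, and is bounded away from zero on its support while possibly being zero beyond it. The hypothesis ``$\phi/\chi \in \mathfrak{X}$ for every $\phi \in \mathfrak{X}$'' in Theorem \ref{th:main} is exactly what licenses the passage to $\hat g = g_0/\chi$, so I would check that division by $\chi$ preserves bounded variation. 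On the support this is straightforward since $\chi$ is bounded below there and of bounded variation, so $1/\chi$ is of bounded variation and the Banach-algebra property finishes it; off the support one only needs to define $g$ arbitrarily (say zero) on a Lebesgue-null set, which does not affect the $L^1$ class. Concretely, since the coboundary equation determines $h$ only up to the freedom in solving $f = h\circ T - h$, and $g_0 = h\chi$ holds a.e., setting $g := g_0/\chi$ on the support of $\chi$ gives $g = h$ a.e. there; and because the invariant measure is equivalent to Lebesgue on $[0,1]$ for $T_\beta$ (the density is in fact strictly positive a.e. on $[0,1]$), the identity $g = h$ holds Lebesgue-a.e. with $g \in \mathscr{BV}$.

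The main obstacle I anticipate is the bookkeeping around the quotient $\mathscr{BV}/\mathscr{N}$ and the positivity of $\chi$. One must confirm that every abstract object --- the operator $\mathscr{L}$, multiplication by $f$, the density $\chi$, and the output $g_0$ --- has a genuine bounded-variation representative so that the final statement can be phrased for honest functions rather than equivalence classes, and one must invoke the explicit structure (or at least the a.e.-positivity on $[0,1]$) of the Gel'fond--Parry density to guarantee that $1/\chi$ is well-behaved and that the identity extends from the support of $\chi$ to all of $[0,1]$ up to Lebesgue-null sets. The improvement over \cite{NiPe12,PoYu99} --- namely the removal of any integrability condition on $h$ --- is inherited directly from Theorem \ref{th:main}, since F\ref{it:cocycle} asks only for measurability of $h$, so no separate argument is needed on that point.
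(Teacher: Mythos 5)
Your proposal follows essentially the same route as the paper's proof: the quotient $\mathscr{BV}/\mathscr{N}$ to obtain the injective dense embedding O\ref{it:embed}, the Perron--Frobenius operator with a Lasota--Yorke estimate for O\ref{it:transfer} and O\ref{it:fredholm}, R\'enyi's lower bound on the invariant density (which in fact holds a.e.\ on all of $[0,1]$, so your worry about the support is moot), and division by $\chi$ to pass from $g=h\chi$ to $h=g/\chi\in\mathscr{BV}$. The one step you omit is the reduction required by F\ref{it:cocycle}, which demands that $h$ be real-valued $\mu$-a.e.: since the theorem permits complex-valued $f$ and $h$, you must first split both into real and imaginary parts and treat each separately, as the paper does in a single sentence.
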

Theorem \ref{th:app2} could of course be extended to broader classes of piecewise monotone maps by means of works such as \cite{HoKe82,Ry83}, and in higher dimensions such results as \cite{Li13,Sa00,Ts01} could also be applied. The literature on these broad classes of transformations however contains relatively few ``off-the-shelf'' results sufficient to give a clean and general proof of the hypothesis O\ref{it:eigen}, and we therefore confine our attention to the case of $\beta$-transformations on the basis of its simplicity and long interest in the literature.
\begin{proof}[Proof of Theorem \ref{th:app2}]
Let $X=[0,1]$ and let $\mu$ denote Lebesgue measure on that interval. By treating the real and imaginary parts of $f$ and $h$ separately we assume without loss of generality that $f$ and $h$ are real-valued. The non-singularity property O\ref{it:nonsing} is quite trivial, and O\ref{it:eigen} dates back to the work of Renyi \cite{Re57} where it is shown additionally that the density $\chi$ satisfies $\ess \inf \chi\geq 1-1/\beta>0$.  We define $\mathfrak{X}:=\mathscr{BV}/\mathscr{N}$. Since obviously every element of $\mathscr{N}$ has integral zero with respect to Lebesgue measure and therefore corresponds to the zero element of $L^1(\mu)$, the space $\mathscr{BV}/\mathscr{N}$ embeds continuously in $L^1(\mu)$. Since every characteristic function of an interval belongs to $\mathscr{BV}$ the range of this embedding is dense. If an element of $\mathscr{BV}$ is zero Lebesgue a.e. then in particular it is zero on a dense subset of $[0,1]$, and this is only possible if it belongs to $\mathscr{N}$; thus the embedding $\mathscr{BV}/\mathscr{N} \to L^1(\mu)$ is injective, and O\ref{it:embed} is satisfied. 
Define $\hat{\mathscr{L}} \colon \mathscr{BV} \to \mathscr{BV}$ by
\[(\hat{\mathscr{L}}\phi)(x):=\sum_{T_\beta y=x}\frac{\phi(y)}{\beta}.\]
It is easily demonstrated that $\hat{\mathscr{L}}$ is a bounded operator on $\mathscr{BV}$ and satisfies
\[\int(\psi \circ T)\cdot\phi \,d\mu = \int \psi \cdot (\hat{\mathscr{L}}\psi\circ T)d\mu\]
for every $\psi \in L^\infty(\mu)$ by an elementary change of variable. Clearly $\hat{\mathscr{L}}$ preserves $\mathscr{N}$ and therefore induces an operator $\mathscr{L}$ on $\mathfrak{X}$ which satisfies O\ref{it:transfer}. It follows from the results of \cite[\S3.2]{Ba00} that the essential spectral radius of $\mathscr{L}$ on $\mathfrak{X}$ is strictly less than $1$, which gives O\ref{it:fredholm}.  If  $f \in \mathscr{BV}$ satisfies $f = h \circ T - h$ Lebesgue-a.e. then its image in $\mathscr{BV}/\mathscr{N}$ obviously has the same property, which gives F\ref{it:cocycle}.  It is not difficult to see that $\mathscr{BV}$ is a Banach algebra with respect to pointwise multiplication from which F\ref{it:bdd} follows easily. Applying Theorem \ref{th:main} we find that the density $\chi$ coincides a.e. with an element of $\mathscr{BV}/\mathscr{N}$ and that there exists $g \in \mathscr{BV}/\mathscr{N}$ such that $g =h \chi$ a.e. Choose $\chi \in \mathscr{BV}$ and $g \in \mathscr{BV}$ to be representatives modulo $\mathscr{N}$ of the corresponding elements of $\mathscr{BV}/\mathscr{N}$ in  such a way that $\inf \chi >0$. Clearly $g=h \chi $ a.e. and $1/\chi \in \mathscr{BV}$, whence $h=g/\chi \in\mathscr{BV}$ a.e. as required.
\end{proof}

\subsection{Real-analytic expanding maps of the circle and bounded holomorphic cocycles}\label{ss:ex1}
Measurable Liv\v{s}ic regularity theorems are typically stated in the H\"older and $C^r$ classes on the basis that once continuous regularity has been obtained, higher regularity can be obtained subsequently by other methods. Theorem \ref{th:main} on the other hand straightforwardly allows one to proceed from measurable to analytic regularity in a single step, for example as follows. 

Let $S^1 \subset \mathbb{C}$ be the unit circle and for every pair of real numbers $r,R$ satisfying $0<r<1<R$ define
\[\mathcal{A}_{r,R}:=\{z \in \mathbb{C} \colon r<|z|<R\}.\]
Define also
\[\mathscr{A}:=\left\{\mathcal{A}_{r,R} \colon 0<r<1<R\right\}.\]
For every $\mathcal{A}\in\mathscr{A}$ let $H^\infty(\mathcal{A})$ denote the complex Banach space of bounded holomorphic functions $\mathcal{A} \to \mathbb{C}$ equipped with the supremum norm. Let $\mu$ denote normalised Lebesgue measure on $S^1$. The following proposition summarises the results of \cite[\S2]{SlBaJu13}:
\begin{proposition}\label{pr:slartibardfast}
Suppose that $T \colon S^1 \to S^1$ is a real analytic map which satisfies $\inf_{z \in S^1}|T'(z)|>1$ and let $U \subseteq \mathbb{C}$ be an open set such that $S^1 \subset U$. Then there exist $\mathcal{A} \in \mathscr{A}$ satisfying $\mathcal{A} \subseteq U$ and a compact linear operator $\mathscr{L} \colon H^\infty(\mathcal{A}) \to H^\infty(\mathcal{A})$ such that
\[\int_{S^1} \psi \cdot (\mathscr{L}\phi)\,d\mu = \int_{S^1} (\psi \circ T)\cdot\phi\,d\mu\]
for all $\phi \in H^\infty(\mathcal{A})$ and $\psi \in L^1(\mu)$.
\end{proposition}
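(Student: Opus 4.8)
Although Proposition \ref{pr:slartibardfast} is quoted directly from \cite[\S2]{SlBaJu13}, I sketch how I would prove it independently. The plan is to realise $\mathscr{L}$ as a Ruelle transfer operator assembled from the holomorphic inverse branches of $T$, and to extract compactness from the strict contraction of those branches by way of Montel's theorem. To begin, real-analyticity lets me extend $T$ to a holomorphic map on a complex neighbourhood of $S^1$ lying inside $U$, and the hypothesis $\inf_{z \in S^1}|T'(z)|>1$ guarantees that $T$ restricts to a finite covering of $S^1$ with, say, $N$ holomorphic local inverse branches $w_1,\dots,w_N$, each satisfying $|w_j'|=1/|T'\circ w_j|<1$ near $S^1$. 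Because each $w_j$ maps $S^1$ into $S^1$ and is strictly contracting there, continuity of $T'$ and compactness of $S^1$ allow me to fix a sufficiently thin annulus $\mathcal{A}=\mathcal{A}_{r,R}\in\mathscr{A}$ with $\mathcal{A}\subseteq U$ on which every $w_j$ is holomorphic and maps $\mathcal{A}$ into a compactly contained sub-annulus, $w_j(\mathcal{A})\Subset\mathcal{A}$.

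Next I would define $\mathscr{L}$ on $H^\infty(\mathcal{A})$ by $(\mathscr{L}\phi)(z):=\sum_{j=1}^N \omega_j(z)\,\phi(w_j(z))$, where $\omega_j$ is the bounded holomorphic weight obtained by differentiating $w_j$ in the angular coordinate (concretely $\omega_j(z)=z\,w_j'(z)/w_j(z)$, which is holomorphic since $w_j$ is nonvanishing on $\mathcal{A}$). This weight is chosen precisely so that the restriction of $\mathscr{L}\phi$ to $S^1$ coincides with the classical transfer operator associated with $T$ and normalised Lebesgue measure. The duality identity then reduces on $S^1$ to the standard change of variables summing over the inverse branches of $T$; I would verify it first for continuous (or $L^\infty$) test functions $\psi$ and then extend to $\psi \in L^1(\mu)$ using that both sides are continuous linear functionals of $\psi \in L^1(\mu)$, since $\phi$ and $\mathscr{L}\phi$ are bounded on $S^1$ and $T$ is non-singular.

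The step carrying the real content is compactness. Each composition operator $\phi\mapsto\phi\circ w_j$ is compact on $H^\infty(\mathcal{A})$: any bounded sequence in $H^\infty(\mathcal{A})$ admits, by Montel's theorem, a subsequence converging uniformly on the compact set $\overline{w_j(\mathcal{A})}\Subset\mathcal{A}$, and because $\phi\circ w_j$ depends only on the values of $\phi$ there, the corresponding images form a Cauchy sequence in the supremum norm on $\mathcal{A}$. Multiplication by the bounded holomorphic weight $\omega_j$ preserves compactness, and a finite sum of compact operators is compact, so $\mathscr{L}$ is compact. The subtlest point, and the one where the uniform expansion hypothesis is indispensable, is the geometric claim that the annulus can be chosen so that all $N$ inverse branches simultaneously map it compactly into itself: this rests on the fact that $\inf_{z\in S^1}|T'(z)|>1$ supplies a definite contraction factor for the $w_j$ that persists throughout a sufficiently thin complex neighbourhood of the circle.
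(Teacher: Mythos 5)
The paper offers no proof of this proposition: it is imported as a summary of \cite[\S2]{SlBaJu13}, so your sketch is being measured against the construction in that reference rather than against anything in the present text. Your overall plan --- extend $T$ holomorphically to an annular neighbourhood of $S^1$, assemble $\mathscr{L}$ from the inverse branches with the angular-derivative weight, and extract compactness from $w_j(\mathcal{A})\Subset\mathcal{A}$ via Montel's theorem --- is the standard construction and is essentially what the cited source does.

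One step fails as literally written: you treat $w_1,\dots,w_N$ as single-valued holomorphic functions on all of $\mathcal{A}$. Since $T$ is expanding it has $|\deg T|\ge 2$, and $T^{-1}(\mathcal{A})$ is a \emph{connected} annular neighbourhood of $S^1$ on which $T$ is an $N$-fold covering of $\mathcal{A}$; the monodromy around the annulus permutes the local branches transitively, so no global single-valued branch exists (already for $T(z)=z^2$ the ``branches'' $\pm\sqrt{z}$ are not functions on $\mathcal{A}_{r,R}$). The formula $(\mathscr{L}\phi)(z)=\sum_j\omega_j(z)\phi(w_j(z))$ must therefore be read as the symmetric sum over all preimages $w\in T^{-1}(z)$ lying in $T^{-1}(\mathcal{A})$, which is well defined and holomorphic precisely because analytic continuation around the annulus only permutes the summands. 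Compactness survives this correction: $\mathscr{L}$ factors as a bounded operator composed with the restriction map $H^\infty(\mathcal{A})\to H^\infty(V)$ for an open $V$ with $T^{-1}(\mathcal{A})\subseteq V\Subset\mathcal{A}$, and that restriction is compact by exactly your Montel argument. Two smaller points: the weight $zw_j'(z)/w_j(z)$ is the angular derivative and is \emph{negative} on $S^1$ when $T$ reverses orientation, so a factor $\mathrm{sgn}(\deg T)$ is needed to keep the weight holomorphic while matching the transfer operator for Lebesgue measure; and your extension of the duality from $\psi\in L^\infty(\mu)$ to $\psi\in L^1(\mu)$ is sound, since $T_*\mu$ has bounded density and hence $\psi\mapsto\int(\psi\circ T)\phi\,d\mu$ is indeed continuous on $L^1(\mu)$.
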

We easily deduce:
\begin{theorem}\label{th:app1}
Let $T \colon S^1 \to S^1$ be a real-analytic map satisfying $\inf_{z \in S^1}|T'(z)|>1$, $U \subset \mathbb{C}$ an open neighbourhood of $S^1$, and $f \colon U \to \mathbb{C}$ a holomorphic function such that $f = h \circ T -h$ $\mu$-a.e. for some measurable function $h \colon S^1 \to \mathbb{C}$. Then there exist an open neighbourhood $V$ of $S^1$ and a holomorphic function $g \colon V \to \mathbb{C}$ such that $g=h$ $\mu$-a.e.
\end{theorem}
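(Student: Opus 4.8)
The plan is to deduce Theorem \ref{th:app1} from Theorem \ref{th:main} by taking $\mathfrak{X}=H^\infty(\mathcal{A})$ for a suitable annulus $\mathcal{A}\in\mathscr{A}$, with $\mathscr{L}$ the transfer operator supplied by Proposition \ref{pr:slartibardfast}. The one genuine wrinkle is that Theorem \ref{th:main} requires the transfer function $h$ to be real-valued, whereas here $h$ is complex; so I would first reduce to the real-valued case by writing $h=h_1+ih_2$ with $h_1,h_2$ real-valued and measurable, and taking real and imaginary parts of the coboundary equation to obtain $\operatorname{Re}f|_{S^1}=h_1\circ T-h_1$ and $\operatorname{Im}f|_{S^1}=h_2\circ T-h_2$ $\mu$-a.e. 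The subtlety is that $\operatorname{Re}f$ and $\operatorname{Im}f$ are not holomorphic, so they are not a priori elements of any $H^\infty(\mathcal{A})$. However, since $f$ is holomorphic on $U\supset S^1$, the restriction $f|_{S^1}$ is real-analytic, and hence so are its real and imaginary parts as functions $S^1\to\mathbb{R}$; a real-analytic function on $S^1$ has exponentially decaying Fourier coefficients and therefore extends to a bounded holomorphic function on some annulus about $S^1$. I would let $F_1,F_2$ denote these holomorphic extensions of $\operatorname{Re}f|_{S^1}$ and $\operatorname{Im}f|_{S^1}$, which restrict on $S^1$ to the required real-valued cocycles.

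Next I would fix the annulus. Choose an annular neighbourhood $U'\subseteq U$ of $S^1$ small enough that $F_1$ and $F_2$ are holomorphic and bounded on $U'$, and apply Proposition \ref{pr:slartibardfast} to $U'$ to obtain $\mathcal{A}\subseteq U'$ and a compact operator $\mathscr{L}$ on $\mathfrak{X}:=H^\infty(\mathcal{A})$ satisfying the transfer identity. Since $\mathcal{A}\subseteq U'$, restriction places $F_1,F_2\in\mathfrak{X}$. I would then verify the hypotheses of Theorem \ref{th:main}. Properties O\ref{it:nonsing} and O\ref{it:eigen} are classical for real-analytic (indeed $C^1$) expanding maps of the circle. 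For O\ref{it:embed}: the inclusion $H^\infty(\mathcal{A})\to L^1(\mu)$ is contractive since the supremum norm over $\mathcal{A}$ dominates the supremum over $S^1$ and hence the $L^1(\mu)$-norm; it is injective because a bounded holomorphic function vanishing a.e. on $S^1$ vanishes on a set with an accumulation point and so vanishes identically; and it has dense range because the Laurent polynomials lie in $H^\infty(\mathcal{A})$ and restrict to trigonometric polynomials, which are dense in $L^1(S^1)$. Property O\ref{it:transfer} is exactly the conclusion of Proposition \ref{pr:slartibardfast} (applied with $\psi\in L^\infty(\mu)\subseteq L^1(\mu)$), and O\ref{it:fredholm} holds in the strong form because $\mathscr{L}$ is compact, so its essential spectrum is $\{0\}$.

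It remains to check F\ref{it:cocycle} and F\ref{it:bdd} and to draw the conclusion, which I would do for $F_1$ and then identically for $F_2$. Property F\ref{it:cocycle} holds by construction with the real-valued $h_1$, and F\ref{it:bdd} holds because $H^\infty(\mathcal{A})$ is a Banach algebra, so multiplication by $F_1$ is bounded. Theorem \ref{th:main} then yields $\chi\in H^\infty(\mathcal{A})$ together with $g_1\in H^\infty(\mathcal{A})$ satisfying $g_1=h_1\chi$ $\mu$-a.e., and likewise $g_2=h_2\chi$ $\mu$-a.e. The invariant density of a real-analytic expanding map of the circle is everywhere positive (this is classical, and follows from topological transitivity together with the continuity of $\chi$ on $S^1$ furnished by $\chi\in H^\infty(\mathcal{A})$), so $\chi$ is non-vanishing on some annular neighbourhood $V\subseteq\mathcal{A}$ of $S^1$. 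There the functions $g_1/\chi$ and $g_2/\chi$ are holomorphic, and setting $g:=(g_1+ig_2)/\chi$ we obtain $h=h_1+ih_2=g$ $\mu$-a.e. with $g$ holomorphic on the neighbourhood $V$ of $S^1$, as required.

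I expect the main obstacle to be the reduction to the real-valued case: it is the only step that is not a routine hypothesis-check, and it hinges on the observation that although $\operatorname{Re}f$ and $\operatorname{Im}f$ fail to be holomorphic, their restrictions to $S^1$ are real-analytic and so extend to bounded holomorphic functions on an annulus, thereby becoming admissible elements of $H^\infty(\mathcal{A})$ to which Theorem \ref{th:main} can be applied. Everything else --- the verification of O\ref{it:embed}, the compactness giving O\ref{it:fredholm}, and the positivity of $\chi$ needed to divide --- is standard.
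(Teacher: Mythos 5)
Your proof is correct and follows the same overall route as the paper: take $\mathfrak{X}=H^\infty(\mathcal{A})$ with the operator supplied by Proposition \ref{pr:slartibardfast}, verify O\ref{it:nonsing}--O\ref{it:fredholm} and F\ref{it:cocycle}--F\ref{it:bdd} exactly as you do (compactness giving O\ref{it:fredholm}, the Banach algebra property giving F\ref{it:bdd}), and divide the output of Theorem \ref{th:main} by the nonvanishing continuous density on a smaller annulus. The one step where you genuinely diverge is the reduction to real-valued cocycles, which you rightly identify as the crux: the paper shrinks $U$ to be invariant under $z\mapsto 1/z^*$, sets $\hat f(z):=f(1/z^*)^*$, and decomposes $f=f_1+if_2$ with $f_1=\tfrac12(f+\hat f)$ and $f_2=-\tfrac{i}{2}(f-\hat f)$ holomorphic and real-valued on $S^1$, whereas you extend $\operatorname{Re}(f|_{S^1})$ and $\operatorname{Im}(f|_{S^1})$ holomorphically using the exponential decay of the Fourier coefficients of a real-analytic function on the circle. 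Since $1/z^*=z$ on $S^1$, the paper's $f_1,f_2$ restrict there to $\operatorname{Re}f$ and $\operatorname{Im}f$, so by uniqueness of analytic continuation your $F_1,F_2$ agree with the paper's $f_1,f_2$ on a common annulus: the two reductions produce the same functions by different means, the reflection formula being slightly more explicit about the domain of the extension, the Fourier argument being more portable to settings without an obvious anti-holomorphic involution. Your accompanying observation that one must split $h=h_1+ih_2$ and match $h_j$ with the corresponding real part of the cocycle is also the correct way to justify the paper's ``without loss of generality $h$ is real-valued.''
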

\begin{proof}
By shrinking the neighbourhood $U$ if necessary we may assume that it is invariant under the transformation $z \mapsto 1/z^*$. Define a holomorphic function $\hat{f} \colon U \to \mathbb{C}$ by $\hat{f}(z):=f(1/z^*)^*$ for all $z \in U$, then we may write $f = f_1+if_2$ where $f_1:=\frac{1}{2}(f+\hat{f})$ and $f_2:=-\frac{i}{2}(f-\hat{f})$. We observe that both $f_1$ and $f_2$ are holomorphic and take only real values on $S^1$. By applying Theorem \ref{th:app1} to each of $f_1$ and $f_2$ separately we may therefore solve the cohomological equation for $f$ under the additional assumption that $f$ takes only real values on $S^1$. We make this assumption for the remainder of the proof without loss of generality.

Let $\mu$ denote normalised Lebesgue measure on $S^1$. Since $T$ is a local diffeomorphism of $S^1$ it is nonsingular with respect to  $\mu$, which gives O\ref{it:nonsing}. Let $f \colon U \to \mathbb{C}$ be a holomorphic function which takes only real values on $S^1$, and which satisfies $f = h \circ T -h$ $\mu$-a.e, where we assume without loss of generality that $h$ is real-valued. By shrinking $U$ if necessary we may assume without loss of generality that $f$ is bounded on $U$. 

The existence of a unique absolutely continuous $T$-invariant probability measure is classical (see for example \cite[Corollary 5.1.25]{KaHa95}) which yields O\ref{it:eigen}, and by \cite[Theorem 5.1.16]{KaHa95} the density $\chi\colon S^1 \to \mathbb{R}$ is continuous and positive. Let $\mathcal{A} \subset U$ and $\mathscr{L}$ be as given by Proposition \ref{pr:slartibardfast}. It is obvious that $H^\infty(\mathcal{A})$ embeds continuously in $L^1(\mu)$, and since it contains the restriction to $\mathcal{A}$ of every complex polynomial it embeds densely in $L^1(\mu)$. If $\phi \in H^\infty(\mathcal{A})$ is zero $\mu$-a.e. then it is zero on $S^1$ by continuity and hence is the zero element of $H^\infty(\mathcal{A})$ by analyticity and connectedness, so the embedding $H^\infty(\mathcal{A}) \to L^1(\mu)$ is injective and we have demonstrated O\ref{it:embed}. Hypothesis O\ref{it:transfer} is given in Proposition \ref{pr:slartibardfast} and hypothesis O\ref{it:fredholm} is obvious by the compactness of $\mathscr{L}$. Since $f$ is holomorphic and bounded on $U$ it belongs to the Banach algebra $ H^\infty(\mathcal{A})$, from which F\ref{it:bdd} follows. Clearly F\ref{it:cocycle} is also satisfied. Applying Theorem \ref{th:main} it follows that $\chi$ extends to an element of $H^\infty(\mathcal{A})$ and that there exists $\hat{g} \in H^\infty(\mathcal{A})$ such that $\hat{g}=h\chi$ a.e. 
Taking $V\subset \mathcal{A}$ to be an open neighbourhood of $S^1$ which contains no zeros of $\chi$, we deduce that $h$ is a.e. equal to the function $\hat{g}/\chi$ which is holomorphic on $V$.\end{proof}

\subsection{Virtually expanding maps and H\"older cocycles}
Let $T \colon M \to M$ be a smooth covering map of a connected closed smooth Riemannian manifold $M$ with nowhere vanishing Jacobian determinant. Following M. Tsujii in \cite{Ts23} we say that $T$ is \emph{$\tau$-virtually expanding} if for some (then for all sufficiently large) $n \geq 1$,
\[\sup_{\substack{(x,v)\in \mathcal{T}^*M \\ v\neq0}} \sum_{T^ny=x} |JT^n(y)|^{-1} \left(\frac{\|((D_xT^n)^*)^{-1}v\| }{\|v\|}\right)^s<1, \]
where $\mathcal{T}^*M$ denotes the cotangent bundle of $M$ and $JT^n$ the Jacobian determinant of $T^n$. If $\dim M>1$ then for every $s>0$ there exist $s$-virtually expanding maps which are not expanding. As an example, Tsujii shows that for any $s>0$, if $m$ is large enough the map $T \colon \mathbb{R}^2/\mathbb{Z}^2 \to \mathbb{R}^2/\mathbb{Z}^2$ defined by
\[T(x,y):= (mx, y+m\cos 2\pi x)\]
is $s$-virtually expanding. We observe that for this particular map Lebesgue measure is invariant and ergodic by classical results on the ergodicity of skew-products, making it an instance of the following theorem:
\begin{theorem}\label{th:tsujii}
Let $T \colon M \to M$ be $s$-virtually expanding, where $2s>d:=\dim M$. Suppose that $T$ has a unique absolutely continuous invariant measure and that the density of this measure is bounded away from $0$. If $f \in C^s(M)$ satisfies $f=h\circ T - h$ Lebesgue a.e, then there exists $g \in  C^{s-d/2}(M)$ such that $f=g\circ T-g$ Lebesgue a.e.
\end{theorem}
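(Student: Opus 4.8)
The plan is to derive Theorem~\ref{th:tsujii} from Theorem~\ref{th:main} in the same ``off-the-shelf'' manner as the two preceding examples, so that the dynamics enters only through Tsujii's cited construction and the real work lies in choosing the Banach space $\mathfrak{X}$. I take $\mu$ to be normalised Riemannian volume on $M$ and, by treating real and imaginary parts separately exactly as in the proof of Theorem~\ref{th:app1}, reduce immediately to the case where $f$ and $h$ are real-valued, so that F\ref{it:cocycle} holds with real $h$. For the operator I take $\mathscr{L}$ to be the Perron--Frobenius transfer operator of $T$ relative to $\mu$, that is $(\mathscr{L}\phi)(x)=\sum_{Ty=x}|JT(y)|^{-1}\phi(y)$, which satisfies the duality relation O\ref{it:transfer} by the usual change of variables. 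Since $T$ is a covering map with nowhere-vanishing Jacobian it is a local diffeomorphism, hence nonsingular, which gives O\ref{it:nonsing}; property O\ref{it:eigen} and the bound $\ess\inf\chi>0$ are precisely the standing hypotheses of the theorem.

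The delicate point is the choice of $\mathfrak{X}$, which must simultaneously contain $C^s(M)$ (so that $f\in\mathfrak{X}$ and F\ref{it:bdd} hold) and embed into $C^{s-d/2}(M)$ (so that the conclusion of Theorem~\ref{th:main} carries the claimed regularity). The isotropic Sobolev space $H^s(M)$ will \emph{not} serve, since at the critical smoothness one has $C^s\not\subset H^s$. Instead I would take $\mathfrak{X}=B^s_{2,\infty}(M)$, the Besov space, for which the continuous embeddings $C^s(M)\hookrightarrow B^s_{\infty,\infty}(M)\hookrightarrow B^s_{2,\infty}(M)\hookrightarrow B^{s-d/2}_{\infty,\infty}(M)\hookrightarrow C^{s-d/2}(M)$ hold on the compact manifold $M$: the middle embedding because $L^\infty(\mu)\subset L^2(\mu)$, and the last by the Besov--Sobolev embedding, which uses $2s>d$. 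The same inequality $2s>d$ makes $B^s_{2,\infty}(M)$ a Banach algebra under pointwise multiplication and shows it embeds continuously, injectively (its elements are continuous) and with dense range (it contains $C^\infty(M)$) into $C^0(M)\subset L^1(\mu)$, giving O\ref{it:embed}, while $f\in\mathfrak{X}$ together with the algebra property gives F\ref{it:bdd}. The remaining hypothesis O\ref{it:fredholm} is where Tsujii's work in \cite{Ts23} is invoked: $s$-virtual expansion forces the essential spectral radius of $\mathscr{L}$ to be strictly less than $1$ on the Sobolev-type space of order $s$.

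To be certain that this spectral gap is available on the specific space $B^s_{2,\infty}(M)$, I would either quote Tsujii's estimate directly if it is framed in these spaces, or else interpolate. The defining strict inequality of $s$-virtual expansion depends continuously on the exponent, so it persists on an open interval of exponents about $s$; thus $\mathscr{L}$ has essential spectral radius below $1$ on $H^{s_0}(M)$ and on $H^{s_1}(M)$ for some $s_0<s<s_1$, and writing $\mathscr{L}$ as a compact operator plus a small-norm remainder on each, real interpolation $(H^{s_0}(M),H^{s_1}(M))_{\theta,\infty}=B^s_{2,\infty}(M)$ transports both pieces, and hence the gap, to $\mathfrak{X}$. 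I expect this reconciliation -- matching the two one-sided embeddings $C^s\hookrightarrow\mathfrak{X}\hookrightarrow C^{s-d/2}$ with the availability of Tsujii's bound on the \emph{same} space $\mathfrak{X}$ -- to be the only genuine obstacle, the dynamics otherwise entering purely through cited results.

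With every hypothesis verified, Theorem~\ref{th:main} yields $\chi\in\mathfrak{X}$ and an element $g_\ast\in\mathfrak{X}$ with $g_\ast=h\chi$ $\mu$-a.e. Because $\chi\in\mathfrak{X}$ is bounded away from $0$ and $B^s_{2,\infty}(M)$ is inverse-closed (the composition of an $\mathfrak{X}$-function taking values in a compact subinterval of $(0,\infty)$ with the smooth map $t\mapsto 1/t$ remains in $\mathfrak{X}$), we have $1/\chi\in\mathfrak{X}$ and therefore $\phi/\chi\in\mathfrak{X}$ for all $\phi\in\mathfrak{X}$. The final clause of Theorem~\ref{th:main} then produces $\hat g:=g_\ast/\chi\in\mathfrak{X}$ with $f=\hat g\circ T-\hat g$ $\mu$-a.e.; since $\hat g=h\chi/\chi=h$ $\mu$-a.e., the embedding $\mathfrak{X}\hookrightarrow C^{s-d/2}(M)$ shows that $h$ coincides $\mu$-a.e. with a function $g\in C^{s-d/2}(M)$ satisfying $f=g\circ T-g$ Lebesgue-a.e., which is the assertion of the theorem.
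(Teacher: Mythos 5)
Your proposal follows the same route as the paper---normalised volume, the Perron--Frobenius operator, Tsujii's essential spectral radius bound for O\ref{it:fredholm}, then Theorem \ref{th:main}---and differs only in the choice of $\mathfrak{X}$: the paper simply takes $\mathfrak{X}=H^s(M)$, asserts that $C^\tau(M)\subset H^\tau(M)$ for all $\tau\geq 0$, quotes \cite{Ts23} verbatim for the essential spectral radius on $H^s(M)$, and at the end divides $g$ by $\chi$ inside $C^{s-d/2}(M)$ rather than invoking the final clause of Theorem \ref{th:main}, so no inverse-closedness of the algebra is needed. Your objection to $H^s$ is legitimate at non-integer $s$: a lacunary series such as $\sum_{k}2^{-ks}e^{i2^k x}$ lies in $C^s$ but not in $H^s$, so the inclusion $C^s(M)\subset H^s(M)$ on which the paper relies genuinely fails there (it is fine for integer $s$). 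You have therefore identified a real wrinkle in the paper's write-up rather than invented one.

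The difficulty is that your repair leaves unverified the one hypothesis that cannot be waved through. Tsujii's theorem is a Hilbert-space result on $H^s(M)$; you need the essential spectral radius of $\mathscr{L}$ to be strictly less than $1$ on $B^s_{2,\infty}(M)$, and your proposed derivation---openness of the virtual-expansion condition in the exponent (true, by continuity of $s\mapsto a^s$ uniformly over the compact cosphere bundle for fixed $n$), a decomposition $\mathscr{L}^n=K_n+R_n$ with the \emph{same} compact $K_n$ and small-norm $R_n$ on both $H^{s_0}$ and $H^{s_1}$, and a Cwikel-type interpolation of compactness through $(H^{s_0},H^{s_1})_{\theta,\infty}=B^s_{2,\infty}$---is a sketch whose middle ingredient requires reopening Tsujii's proof rather than citing it. As written this is a gap, and you acknowledge as much. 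A cheaper fix that stays entirely within cited results: by the same openness, $T$ is $s'$-virtually expanding for some $s'<s$ with $s'>d/2$; then $C^s(M)\hookrightarrow H^{s'}(M)$ does hold (strictly lowering the smoothness index absorbs the third Besov index), Tsujii's bound applies on $H^{s'}(M)$ exactly as stated, and one obtains $g\in C^{s'-d/2}(M)$, i.e. the theorem with an arbitrarily small loss in the exponent. Recovering the endpoint $C^{s-d/2}$ exactly is precisely what forces either the paper's (unjustified at non-integer $s$) inclusion or your (unproved) Besov spectral bound. One further small point: $B^{s-d/2}_{\infty,\infty}(M)=C^{s-d/2}(M)$ only when $s-d/2\notin\mathbb{Z}$; at integer $s-d/2$ the Zygmund space is strictly larger, an edge case that your chain of embeddings (and the paper's use of the Sobolev embedding) silently excludes.
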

The drop in regularity from $C^s(M)$ to $C^{s-d/2}(M)$ arises via the Sobolev embedding theorem. We expect that this loss of regularity could be reduced or removed by an extension of Tsujii's results from the context of the Sobolev Hilbert space $H^\tau(M)$ to Sobolev spaces $W^{\tau,p}(M)$ with $p> 2$, but we do not attempt such an extension here.
\begin{proof}
Let $\mu$ denote normalised Lebesgue measure on $M$. Since $T$ is a local diffeomorphism it is non-singular with respect to $\mu$, which yields O\ref{it:nonsing}, and we have presumed O\ref{it:eigen} in the hypotheses of the theorem.

We review some facts concerning Sobolev spaces of fractional index (see for example \cite{Ta81}). For each real $\tau\geq 0$ let $H^{\tau}(M)$ denote the Sobolev space of order $\tau$ on the manifold $M$. Here the Sobolev space $H^\tau(\mathbb{R}^d)$ may be understood as the completion of the vector space of all smooth compactly supported functions $\mathbb{R}^d \to \mathbb{C}$  with respect to the norm
\[\|\phi\|_{H^\tau(\mathbb{R}^d)}:=\left(\int_{\mathbb{R}^d} \left(1+\|\xi\|^2\right)^{\tau} \left\|\hat{\phi}(\xi)\right\|^2 d\xi\right)^{\frac{1}{2}},\]
where $\hat{\phi}$ denotes the Fourier transform of $\phi$; the Sobolev space $H^\tau(M)$ is then constructed via a representation of $M$ using a fixed smooth partition of unity in a natural fashion. Up to equivalence of norms the space thus defined is independent of the choice of partition of unity. Both $H^\tau(\mathbb{R}^d)$ and $H^\tau(M)$ are Hilbert spaces, and when $\tau>d/2$ they are Banach algebras; the latter contains $C^\tau(M)$ as a linear subspace for all $\tau \geq 0$. As a consequence of the Sobolev embedding theorem, $H^\tau(M)$ embeds continuously in $C^{\tau-d/2}(M)$ when $\tau> d/2$. 

Define $\mathfrak{X}:=H^s(M)$, which as just noted embeds continuously in $C^{s-d/2}(M)$ and hence in particular embeds continuously in $L^1(\mu)$; since $H^s(M)$ contains $C^s(M)$ this embedding is also dense,  so O\ref{it:embed} is satisfied. The operator $\mathscr{L} \colon H^s(M) \to H^s(M)$ defined by
\[(\mathscr{L}\phi)(x)=\sum_{Ty=x} \frac{\phi(y)}{|JT(y)|}\]
satisfies O\ref{it:transfer} by standard calculations and is shown in \cite{Ts23} to have essential spectral radius strictly less than $1$, yielding O\ref{it:fredholm}.  By hypothesis $f \in C^s(M)\subset \mathfrak{X}$ satisfies F\ref{it:cocycle}, and  the multiplication axiom F\ref{it:bdd} follows since $H^s(M)$ is a Banach algebra. It follows from Theorem \ref{th:main} that $\chi \in H^s(M)$ and there exists $g \in H^s(M)$ such that $h\chi = g$ Lebesgue a.e. In particular $g, \chi \in C^{s-d/2}(M)$ and by hypothesis $\chi$ can have no zeros on $M$, so we have $h=g/\chi$ a.e. with $g/\chi \in C^{s-d/2}(M)$ as claimed.
\end{proof}

\section{Acknowledgments}
]

The author thanks Oliver Butterley, Peyman Eslami, Carlangelo Liverani and Arick Shao for helpful conversations relating to this work.

\bibliographystyle{acm}
\bibliography{ML}

\end{document}